\def\ker{\mathop{\rm ker}\nolimits}
\def\dfrac{\displaystyle \frac}
\newcounter{prop}
\newtheorem{proposition}[prop]{Proposition}
\newcounter{coro}
\newtheorem{corollary}[coro]{Corollary}
\begin{document}

\date{\today}
\title{On a class of immersions between almost para-Hermitian manifolds}
\author{Piotr Dacko}
\address{}
\email{{\tt piotrdacko{\char64}yahoo.com}}
\subjclass[2000]{53C15, 53C25}
\keywords{almost paraHermitian manifold, slant submanifold, slant immersion}

\begin{abstract} Almost para-Hermitian manifold it is manifold equipped with almost para-complex structure 
and compatible pseudo-metric of neutral signature. It is considered a class of immersions of almost 
para-Hermitian manifolds into almost para-Hermitian manifolds. Such immersions are called slant 
submanifolds. The concept  is an analogue of the idea of slant submanifold in  
 almost Hermitian geometry. There are classified pointwise slant surfaces of four dimensional almost para-Hermitian manifold. 
\end{abstract}

\maketitle

\section{Introduction}
The idea of slant submanifold arose in complex 
geometry as  generalization where  complex and totaly real submanifolds can be considered as 
limit cases. Such manifolds were extensively studied. There  are attempts to provide similar concept for para-complex, more generaly 
almost para-Hermitian manifolds and their submanifolds.  
In this  paper  the author is trying to answer the question  how to possibly
 introduce notion of {\em slant submanifold}   of almost 
para-Hermitian manifold. 

One of the goals of this work is  to show 
that there are essential differences between 
 slant submanifolds in complex geometry and what we may consider 
as they counterparts in  para-complex geometry.

\section{Preliminaries} All objects  considered in this paper are
to be smooth eg. manifold, tensor fields, etc. if not otherwise stated. All manifolds are assumed to be connected.

Let $\mathcal M$ be even dimensional manifold, $\dim\mathcal M=2n \geq 2$, equipped with pseudo-Riemannian metric $g$ of signature $(n,n)$ - such pseudo-metric will be called hyperbolic,
we also use the terms neutral pseudo-metric or Artain pseudo-metric. Customary, even if 
not correct, we drop prefix pseudo- and simply refer to $g$ as metric, if there is no chance for confusion. Similarly $(\mathcal M,g)$ is to be referred to by one of terms: hyperbolic, neutral or 
Artain manifold.  For the source of our terminology see M. Berger \cite{Ber}

Let $(\mathcal M, g)$ be Artain manifold, almost para-complex structure on $\mathcal M$,
is $(1,1)$-tensor field (affinor) $\phi$, which satisfies the conditions
\begin{equation}
\phi^2 = Id, 
\end{equation}
and for each point $q\in \mathcal M$, $\pm 1$ eigen-spaces $ \mathcal V_q^{-1}, \mathcal V_q^{+1}
\subset T_q\mathcal M$, are of the same ranks (dimensions)
\begin{equation}
\dim \mathcal V^{-1}_q = \dim \mathcal V_q^{+1}.
\end{equation}
The pair $(g,\phi)$ is called almost para-Hermitian structure if additionaly $\phi$ satisfies
\begin{equation}
g(\phi X,\phi Y) = -g(X,Y),
\end{equation}
for arbitrary vector fields. In this case it is said that almost para-complex structure is compatible and manifold with fixed almost para-Hermitian structure is called almost para-Hermitian manifold. 
On almost para-Hermitian manifold eigen-distributions, $\epsilon =\pm 1$
\begin{equation}
\mathcal V^{\epsilon} : \mathcal M\ni q \mapsto \mathcal V_q^{\epsilon},
\end{equation}
are both totally isotropic 
\begin{equation}
g(\mathcal V^{\epsilon},\mathcal V^{\epsilon})=0.
\end{equation}  

Almost para-complex structure is said to be integrable, if there is atlas of coordinate charts, such that coefficients of $\phi$ are constant, on each chart. In this case it 
is said that the structure $\phi$  is  para-complex, and manifold is called para-complex.  Almost para-Hermitian manifold with para-complex structure is called para-Hermitian.
For every almost para-Hermitian manifold
\begin{equation}
g(\phi X,Y) +g(X,\phi Y)=0,
\end{equation}
hence $\omega(X,Y)=g(\phi X,Y)$ is skew 2-form, customary called fundamental form.

According to Walker theorem on almost product structures \cite{Walk},  para-complex  manifold 
can be characterized by the following conditions
\begin{itemize}
\item[(C1)] eigen-distributions $\mathcal V^\epsilon$, $\epsilon={\pm 1}$ are completely integrable, equivalently involutive,
\item[(C2)] Nijenhuis torsion of almost para-complex structure vanishes
$$
[\phi,\phi](X,Y)=[X,Y]+[\phi X,\phi Y]- \phi ( [\phi X,Y] + [X,\phi Y])=0.
$$
\end{itemize}
Particularly on para-complex manifold there is affine, torsionless connection $\nabla$, such that $\phi$ is parallel with resp. to this connection.

  From Walker theorem it follows  that simply connected para-complex manifold 
 is diffeomorphic to Cartesian product $\mathcal M = \mathcal N_1\times \mathcal N_2$, of simply connected manifolds, 
 and eigen-distributions of $\phi$ are  kernels of canonical projections
 $\pi_i:\mathcal N_1\times \mathcal N_2\rightarrow N_i$,
 \begin{equation}
 \mathcal V^{-1} =\ker \pi_{1*},\quad \mathcal V^{+1}=\ker\pi_{2*}.
\end{equation}

Now let $(\mathcal M, \phi, g)$ be almost para-Hermitian manifold. Fix a point $q \in \mathcal M$,  linear subspace $\mathcal W_q \subset T_q\mathcal M$ 
is called singular  if equation 
\begin{equation}   
   g(v,\mathcal W_q)=0,
\end{equation}
 has non-trivial solution, and subspace $\mathcal I_q \subset \mathcal W_q$, of all such vectors is called singularity of $\mathcal W_q$.  
Note that $\mathcal I_q$, by itself is totally isotropic
 $$
 g(\mathcal I_q,\mathcal I_q) =0.
 $$
     In other words restriction $g|_{\mathcal W_q}$,
 is degenerate symmetrc form and $\mathcal I_q$ is the kernel of this form.  Similarly distribution
 \begin{equation}
 \mathcal W: \mathcal M\ni q \mapsto \mathcal W_q \subset T_q\mathcal M,
 \end{equation}
 is called singular if for some  $q$, $\mathcal W_q$ is singular.
 
Regular vector   hull   $\mathcal H_q$,  is by definition minimal non-singular space containing $\mathcal W_q$. 
Minimality means that if $\mathcal H_q \supset \mathcal H_q' \supset \mathcal W_q$, and
$\mathcal H_q'$ is also regular, then $\mathcal H_q=\mathcal H_q'$.
The hull of trivial space $\mathcal W_q=\lbrace 0 \rbrace$, is by definition trivial.

Assume  $\mathcal W_q$ is singular and $\phi$-invariant, $\phi(\mathcal W_q)\subset\mathcal W_q$, $\mathcal I_q$ its singularity.
 Then $\phi(\mathcal I_q) \subset \mathcal I_q$, and  $\mathcal W_q$ admits decomposition into direct sum
 $$
 \mathcal W_q =\mathcal I_q\oplus \mathcal G_q,
 $$
 where  $\mathcal G_q$ is maximal non-singular subspace, with necessary neutral signature $(l,l)$,  and $\phi$-invariant.

It is important to notice  that  regular hull of $\phi$-invariant, singular subspace, always has neutral signature $(k+l,k+l)$, where $k =\dim \mathcal I_q$.   
In fact every regular hull $\mathcal H_q$ is isometric to orthogonal direct sum
\begin{equation}
\mathcal H_q \leftrightarrow \mathcal I_q\oplus \mathcal I_q^*\oplus \mathcal G_q,
\end{equation}•  
of space $(\mathcal G_q, g|_{\mathcal G_q})$, and the space $\mathcal I_q\oplus \mathcal I^*_q$  equipped with its canonical neutral metric.
Indeed, the  map 
$$
B:\mathcal H_q\rightarrow \mathcal H_q^*,  \quad x \mapsto \alpha = B(x), \; \alpha(y) = g(x,y),
$$
 between $\mathcal H_q$ and its dual  is linear isomorphism. We set 
$$
g^{-1}(\alpha,\beta) = g(B^{-1}\alpha,B^{-1}\beta), \quad \alpha,\beta \in \mathcal H^*_q,
$$
$B^{-1}$ denoting the inverse. Now $B$ is  isometry between  $(H_q,g)$ and $\mathcal (H_q^*,g^{-1})$. 
There is canonical  splitting 
$$
\mathcal H_q^* = \mathcal A_q\oplus B(\mathcal G_q),
$$
where $\mathcal A_q\subset \mathcal H_q^*$ is a space of all linear forms, vanishing on $\mathcal G_q$, 
and $B(\mathcal G_q)$ is image by $B$.  Note splitting is $g^{-1}$-orthogonal. To finish the proof we need only to show that $\mathcal A_q$ 
is of neutral signature.
 There is 
$$
B(\mathcal I_q)\subset \mathcal A_q,
$$ 
and any element of the quotient space $\mathcal A_q/B(\mathcal I_q)$, can be treated as linear form on $\mathcal I_q$. So 
$$
\mathcal I_q^* \subset \mathcal A_q/B(\mathcal I_q),
$$ 
and by minimality of $\mathcal H_q$, it has to be  
$$
\mathcal I_q^* = \mathcal A_q/B(\mathcal I_q).
$$
 As $B(\mathcal I_q)$
is totally isotropic in $\mathcal A_q \cong \mathcal I_q\oplus \mathcal I_q^*$, $\mathcal A_q$ has neutral signature $(k,k)$, where $k=\dim \mathcal I_q$. 
For the concepts of regular hull see also \cite{Ber}, ch.7.
 
Now let $\mathcal W_q\subsetneq T_q\mathcal M$ be a non-singular, we set 
\begin{equation}
\mathcal K_q = \mathcal W_q^\perp\cap(\phi \mathcal W_q)^\perp,
\end{equation} 
$\phi \mathcal W_q$ denotes the image by $\phi$.
Directly we find  $\phi(\mathcal K_q)=\mathcal K_q$. 


\section{Slant immersions}
Let $(\mathcal M, \phi,g)$ and $(\mathcal S, \tilde g,\tilde \phi)$ be almost para-Hermitian manifolds, $f:\mathcal S\rightarrow \mathcal M$ - isometric immersion $\tilde g=f^*g$. 
We refer to $(g,\phi)$, $(\tilde g,\tilde\phi)$ as outer and inner almost para-Hermitian structures,
resp. 
 
For $p\in \mathcal S$, $q=f(p)\in f(\mathcal S)\subset \mathcal M$, we denote by $T_qf \subset T_q\mathcal M$, subspace tangent to image $f(\mathcal S)$ at $q$, and by
$N_qf\subset T_q\mathcal M$  subspace normal to $f(S)$,  hence
we have orthogonal decomposition 
\begin{equation}
T_q\mathcal M =  T_qf\oplus N_qf.
\end{equation}

For a vector $v\in T_p\mathcal S$, 
$$
(\phi f_*(v))^\top,\quad(\phi f_*(v))^\perp,
$$ 
denote orthogonal
projections of $\phi f_*(v)$ onto tangent and normal spaces.  The correspondence 
\begin{equation}
 f_*(v) \mapsto (\phi f_*(v))^\perp
\end{equation}
defines linear endomorphism $A:T_qf\rightarrow N_qf$, called normal map. 

  Note that  $w \in T_qf$ belongs to the kernel $\ker A$ $\iff$ $\omega(w,N_qf)=0$, hence 
   \begin{equation}
 \ker A = T_qf\cap(\phi N_qf)^\perp = N_qf^\perp\cap (\phi N_qf)^\perp,
 \end{equation}
 in particular $\phi(\ker A)=\ker A$.  
 
  Immersion $f$ is called slant at the point $p\in \mathcal S$, if 
there is  orthogonal decomposition  
 \begin{equation}
 T_qf =  \mathcal H_q\oplus \mathcal H_q^\perp,
 \end{equation}
where $\mathcal H_q$, is regular vector hull of the kernel of normal map $A$, there is
$\lambda = \lambda_q \in \mathbb R$, and following conditions are satisfied
\begin{eqnarray}
(\phi f_*(v))^\top & =& \epsilon f_*(\tilde \phi v), \quad f_*(v)\in \mathcal H_q, \\[+4pt]
(\phi f_*(w))^\top & = &\lambda f_*(\tilde \phi w), \quad f_*(w) \in \mathcal H_q^\perp ,
\end{eqnarray}
$v\in T_p\mathcal S$. The case $\epsilon =+1$ we call semi-invariant, and $\epsilon=-1$, semi-anti-invariant. It is allowed that one of $\mathcal H_q$ or
$\mathcal H_q^\perp$ is trivial. 
Having above pointwise definition we will said    immersion $f$ is slant if it  is  slant  at every point of $\mathcal S$. 
 
By the previous section both $\mathcal H_q$ and $\mathcal H_q^\perp$ are Artain subspaces in $T_qf$, now if we go back to the inner structure 
on $\mathcal S$, we see that there is corresponding orthogonal splitting  $T_p\mathcal S = \mathcal Q_p\oplus \mathcal Q_p^\perp$, 
$\mathcal H_q= f_*(\mathcal Q_p)$, $\mathcal H_q^\perp= f_*(\mathcal Q_p^\perp)$,  
into Artain subspaces, and each of these subspaces are invariant  
$\tilde \phi (\mathcal Q_p)=\mathcal Q_p$, $\tilde \phi (\mathcal Q_p^\perp) = \mathcal Q_p^\perp$.   

In the  definition above there is no requirement concerning uniquenes of such decomposition.
 The proposition below asserts that simply such requirement is unneccesary provided 
$\epsilon \neq \lambda$. 

\begin{proposition} Assume immersion is slant at $p\in \mathcal S$,  $q=f(p)$, let 
\begin{gather*}
T_qf = \mathcal H_q\oplus \mathcal H_q^{\perp}, \\
T_qf = \mathcal H_q'\oplus \mathcal H_q^{\prime \perp},
\end{gather*}
be orthogonal splittings corresponding to $(\epsilon,\lambda)$ and $(\epsilon,\lambda')$, resp. If $\epsilon \neq \lambda$ then 
\begin{equation*}
\mathcal H_q =\mathcal H_q'\quad \mathcal H_q^\perp = \mathcal H_q^{\prime \perp}.
\end{equation*}
\end{proposition}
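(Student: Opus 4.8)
The plan is to exploit that the tangential action of $\phi$ and the pushed-forward inner structure $J:=f_*\circ\tilde\phi\circ f_*^{-1}$ on $T_qf$ (which satisfies $J^2=Id$ and is invertible) are \emph{canonical} objects, independent of any particular slant decomposition, together with the fact, recorded in Section 2, that a regular hull of a $\phi$-invariant singular subspace has a dimension depending only on the subspace. Throughout I use that, as noted just before the statement, $J$ preserves each summand $\mathcal H_q,\mathcal H_q^\perp,\mathcal H_q',\mathcal H_q^{\prime\perp}$, since $\tilde\phi$ preserves the corresponding inner subspaces $\mathcal Q_p,\mathcal Q_p^\perp,\dots$.

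First I would establish the inclusion $\mathcal H_q'\subseteq\mathcal H_q$. Take $u\in\mathcal H_q'$ and split it along the first decomposition as $u=a+b$ with $a\in\mathcal H_q$ and $b\in\mathcal H_q^\perp$. I compute $(\phi u)^\top$ in two ways. On one hand, because $u\in\mathcal H_q'$ and the second splitting is of type $(\epsilon,\lambda')$, the first slant relation gives $(\phi u)^\top=\epsilon Ju=\epsilon Ja+\epsilon Jb$, using $Ja\in\mathcal H_q$ and $Jb\in\mathcal H_q^\perp$. On the other hand, by linearity of $\phi$ and of the tangential projection, $(\phi u)^\top=(\phi a)^\top+(\phi b)^\top$, and applying the $(\epsilon,\lambda)$ conditions to $a$ and $b$ separately yields $(\phi u)^\top=\epsilon Ja+\lambda Jb$. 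Subtracting the two expressions gives $(\epsilon-\lambda)Jb=0$. Since $\epsilon\neq\lambda$ and $J$ is invertible, $b=0$, whence $u=a\in\mathcal H_q$.

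The decisive step is to upgrade this inclusion to an equality, and here the naive hope of a symmetric argument fails: the hypothesis $\epsilon\neq\lambda$ is asymmetric, and nothing is assumed about $\lambda'$. Instead I would invoke the structural description from Section 2. By definition of slant immersion, both $\mathcal H_q$ and $\mathcal H_q'$ are regular hulls of one and the same canonical space $\ker A$, which is $\phi$-invariant, $\phi(\ker A)=\ker A$. By the isometric splitting $\mathcal H_q\leftrightarrow\mathcal I_q\oplus\mathcal I_q^*\oplus\mathcal G_q$ established earlier, every regular hull of $\ker A$ has dimension $\dim(\ker A)+\dim\mathcal I$, where $\mathcal I$ is the singularity of $\ker A$; in particular $\dim\mathcal H_q=\dim\mathcal H_q'$. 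Combined with $\mathcal H_q'\subseteq\mathcal H_q$, this forces $\mathcal H_q=\mathcal H_q'$. This equality step is the main obstacle, and it is precisely where the dimension-invariance of regular hulls of $\phi$-invariant subspaces is indispensable.

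Finally, the equality of the complements is automatic. The sum $T_qf=\mathcal H_q\oplus\mathcal H_q^\perp$ is orthogonal with both summands non-degenerate (Artain), so $T_qf$ is itself non-degenerate and the orthogonal complement of a subspace in $T_qf$ is unique. Since $\mathcal H_q^\perp$ and $\mathcal H_q^{\prime\perp}$ are, respectively, the orthogonal complements in $T_qf$ of $\mathcal H_q$ and $\mathcal H_q'$, the identity $\mathcal H_q=\mathcal H_q'$ immediately yields $\mathcal H_q^\perp=\mathcal H_q^{\prime\perp}$, completing the proof.
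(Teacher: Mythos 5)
Your proof is correct and follows essentially the same route as the paper: the core step—computing $(\phi u)^\top$ once via the $(\epsilon,\lambda')$ relation on $\mathcal H_q'$ and once by splitting $u$ along the $(\epsilon,\lambda)$ decomposition, then cancelling to get $(\epsilon-\lambda)Jb=0$—is exactly the paper's argument. The only (harmless) difference is cosmetic: you conclude equality from the dimension invariance of regular hulls of $\ker A$, whereas the paper gets it directly from the minimality clause in the definition of the regular hull after showing the extra piece $\mathcal I_q''$ lands inside $\mathcal H_q$.
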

\begin{proof}
Let 
\begin{gather*}
\mathcal H_q = (\mathcal I_q\oplus \mathcal I_q')\oplus \mathcal G_q, \\
\mathcal H_q' = (\mathcal I_q \oplus \mathcal I_q'')\oplus \mathcal G_q, 
\end{gather*}
where $\ker A = \mathcal H_q\cap \mathcal H_q' = \mathcal I_q\oplus\mathcal G_q$.
 We may assume $\mathcal I_q''\neq \{0\}$. 
Let $f_*(v)\in \mathcal I_q''$, and
$$
f_*(v) = f_*(u_1)+f_*(u_2), \quad f_*(u_1) \in \mathcal H_q, \quad f_*(u_2)\in \mathcal H_q^{\perp},
$$
then 
\begin{eqnarray*}
\phi f_*(v) &=& \epsilon f_*(\tilde \phi v) + Af_*(v) = \epsilon f_*(\tilde \phi u_1)+\epsilon f_*(\tilde \phi u_2)+Af_*(v), \\
\phi f_*(v) &=& \phi f_*(u_1)+\phi f_*(u_2) = \epsilon f_*(\tilde \phi u_1)+\lambda f_*(\tilde\phi u_2) +Af_*(v),
\end{eqnarray*}
hence $\epsilon f_*(\tilde \phi u_2) = \lambda f_*(\tilde \phi u_2)$, assumption   $\epsilon \neq \lambda$ follows $u_2=0$, and 
$\mathcal I_q'' \subset \mathcal H_q$. 
\end{proof}

\section{Slant surfaces}
In this section we study slant surfaces in four dimensional almost para-Hermitian manifold.
Let $\mathcal S$ be para-Hermitian  surface, that is real 2-dimensional
manifold with almost para-Hermitian structure $(\tilde \phi,\tilde g)$,
and $\mathcal M$ be 4-dimensional almost para-Hermitian manifold with
almost para-Hermitian structure $(\phi, g)$, $f$ - isometric  immersion
$$
f: (\mathcal S, \tilde\phi, \tilde g)\rightarrow (\mathcal M, \phi,g).
$$ 
According to previous section there are following possibilities:
\begin{itemize}
\item[a)] the kernel of normal map is non-trivial, in this case if $A\neq 0$, then the kernel is exactly one-dimensional,
\item[b)] normal map is trivial $A=0$ wich means surface is almost para-Hermitian submanifold,
\item[c)] the kernel of $A$ is trivial and $A \neq 0$, then $A$ is isomorphism between tangent and normal spaces, 
in this case totaly real surfaces are treatead as limit case.  
\end{itemize}•
For surfaces assumption   $f$ is slant is euivalent to the following identity, which covers all above possibilities
\begin{equation}
\label{phif}
\phi f_*(v) =\lambda f_*(\tilde\phi v)+Af_*(v), \;\; \lambda \in \mathbb R. 
\end{equation}
Then
\begin{eqnarray}
\label{gAA}
     & g_q(Af_*(v),Af_*(w)) = (\lambda^2-1)\tilde g_p(v,w), &\\
\label{phiA}  &  \phi_q Af_*(\tilde \phi v) = (1-\lambda^2)f_*(\tilde\phi v)-\lambda Af_*(v),  &
\end{eqnarray}
hence the composition map $Af_*: T_p\mathcal S\rightarrow N_{f(p)}f$, for $|\lambda|\neq 1$, is conformal.
 
The goal of next proposition is to describe completely coefficients of $\phi$,  that is coefficients of almost para-complex structure 
of {\em ambient} space, at point where immersion is slant.
 We set ${\rm ker}\, A$, ${\rm im}\, A$, as kernel and image of $A$.
 Let fix orthonormal frame 
$(v_1^+,v_2^-=\tilde\phi v_1^+)$ of $T_p\mathcal S$. Vectors $(e_1^+=f_*(v_1),e_2^-=f_*(v_2))$
are orthonormal and span tangent plane 
$T_qf$.
\begin{proposition}
\label{locstr}
There exists orthonormal frame $(e^+_3, e^-_4) $ of normal plane, such that  
 
    \begin{equation}
      \label{phi1}%
      \begin{array}{ll}
        \phi e_1 =\lambda e_2 +c_\lambda e_3, & \quad\phi e_2 = \lambda e_1+c_\lambda e_4, \\[+4pt]
        \phi e_3 = -c_\lambda e_1-\lambda e_4, & \quad\phi e_4 = -c_\lambda e_2-\lambda e_3, \\[+4pt]
        |\lambda|>1,\;\; c_\lambda=\sqrt{\lambda^2-1},     
      \end{array}%
    \end{equation}
  \begin{equation}
      \label{phi2}
      \begin{array}{ll}
        \phi e_1 = \lambda e_2 +c_\lambda e_4, & \quad\phi e_2 = \lambda e_1+c_\lambda e_3, \\[+4pt]
        \phi e_3 = c_\lambda e_2 - \lambda e_4, & \quad\phi e_4
        = c_\lambda e_1-\lambda e_3, \\[+4pt]
      |\lambda| <1,\;\;  c_\lambda= \sqrt{1-\lambda^2},  
      \end{array}
    \end{equation}
  
\begin{equation}
\label{phi3}
\begin{array}{ll} 
 \phi e_1 = \lambda e_2 +a_0(e_3+e_4), &  \quad
\phi e_2 = \lambda e_1 -\frac{\epsilon}{\lambda} a_0 (e_3+e_4), \\[+4pt]
\phi e_3 = \epsilon e_4 -a_0(e_1+\frac{\epsilon}{\lambda} e_2), & \quad
\phi e_4 = \epsilon e_3 +a_0(e_1+\frac{\epsilon}{\lambda} e_2),\\[+4pt]
|\lambda| =1,\;\;  |\epsilon|= 1, \;\;a_0 \in \mathbb R,
\end{array}
\end{equation}

\end{proposition}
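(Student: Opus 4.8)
The plan is to extract all the coefficients directly from the slant identity \eqref{phif} evaluated on the given frame, and then to separate three cases according to the causal character of the image vectors $Af_*(v_1)$ and $Af_*(v_2)$.

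First I would set $n_1=Af_*(v_1)=Ae_1$ and $n_2=Af_*(v_2)=Ae_2$, both lying in $N_qf$. Since the frame satisfies $\tilde\phi v_1=v_2$ and $\tilde\phi v_2=v_1$, evaluating \eqref{phif} at $v=v_1$ and $v=v_2$ gives at once $\phi e_1=\lambda e_2+n_1$ and $\phi e_2=\lambda e_1+n_2$; these are the first lines of each of \eqref{phi1}--\eqref{phi3} once $n_1,n_2$ are written in a normal frame. Next I would record the Gram matrix of $n_1,n_2$: either by quoting \eqref{gAA}, or by expanding $g(\phi e_i,\phi e_j)=-g(e_i,e_j)$ directly, one gets $g(n_1,n_1)=\lambda^2-1$, $g(n_2,n_2)=1-\lambda^2$ and $g(n_1,n_2)=0$. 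Because $T_qf$ carries signature $(1,1)$ while the ambient metric is neutral of signature $(2,2)$, the normal plane $N_qf$ is itself non-degenerate of signature $(1,1)$, so an orthonormal normal frame $(e_3^+,e_4^-)$ exists. Finally, applying $\phi$ to $\phi e_1,\phi e_2$ and using $\phi^2=Id$ (equivalently quoting \eqref{phiA}) yields $\phi n_1=(1-\lambda^2)e_1-\lambda n_2$ and $\phi n_2=(1-\lambda^2)e_2-\lambda n_1$, which will fix $\phi e_3,\phi e_4$.

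The non-degenerate cases correspond to the sign of $\lambda^2-1$ and are routine. When $|\lambda|>1$ the vector $n_1$ is spacelike and $n_2$ timelike, so I set $e_3=c_\lambda^{-1}n_1$ and $e_4=c_\lambda^{-1}n_2$ with $c_\lambda=\sqrt{\lambda^2-1}$; orthonormality of $(e_3,e_4)$ follows from the Gram matrix, and substituting $n_i=c_\lambda e_{i+2}$ into the two relations for $\phi n_i$ and dividing by $c_\lambda$ produces \eqref{phi1}. When $|\lambda|<1$ the causal characters are reversed, so the correct choice is $e_3=c_\lambda^{-1}n_2$, $e_4=c_\lambda^{-1}n_1$ with $c_\lambda=\sqrt{1-\lambda^2}$, and the same substitution gives \eqref{phi2}.

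The delicate case is $|\lambda|=1$, where $\lambda^2-1=0$ forces $n_1,n_2$ to be null and mutually orthogonal, so neither can be normalized and the previous recipe breaks down. Here $A=0$ is the trivial instance of \eqref{phi3} with $a_0=0$; otherwise $\phi n_2=-\lambda n_1$ and the invertibility of $\phi$ show $n_1\neq0$, and in a plane of signature $(1,1)$ two orthogonal null vectors lie on one null line, so $n_2=\mu n_1$ for a scalar $\mu$. Choosing the orthonormal normal frame so that this null line is $\mathbb R(e_3+e_4)$ lets me write $n_1=a_0(e_3+e_4)$; feeding $n_2=\mu n_1$ into $\phi n_1=-\lambda n_2$ then shows $e_3+e_4$ is a $\phi$-eigenvector, whose eigenvalue is necessarily $\pm1$, and calling it $\epsilon$ fixes $\mu=-\epsilon/\lambda$, giving the first line of \eqref{phi3}. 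To split off $\phi e_3$ and $\phi e_4$ individually I would write $\phi e_3=pe_1+qe_2+re_3+se_4$ and solve for the coefficients from the compatibility equations $g(\phi e_3,\phi e_j)=-g(e_3,e_j)$ together with $\phi^2e_3=e_3$ and $\phi(e_3+e_4)=\epsilon(e_3+e_4)$, and likewise for $e_4$. This small linear system has the last two lines of \eqref{phi3} as its unique solution, and I expect the main computational obstacle to be confirming that this solution is simultaneously $\phi$-compatible and involutive, where one repeatedly uses $\lambda^2=1$, hence $1/\lambda=\lambda$.
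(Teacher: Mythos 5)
Your proposal is correct and follows essentially the same route as the paper's own proof: in the non-degenerate cases you normalize $Af_*(v_1)$, $Af_*(v_2)$ via \eqref{gAA} with exactly the same swap of roles for $|\lambda|<1$, and for $|\lambda|=1$ you identify $\im A$ as a null line spanned by a $\phi$-eigenvector $e_3+e_4$ and recover $\phi e_3$, $\phi e_4$ from the compatibility of $g$ and $\phi$, just as the paper does. The only cosmetic difference is that you pin down $\phi e_3$, $\phi e_4$ using $g(\phi\,\cdot,\phi\,\cdot)=-g(\cdot,\cdot)$ and $\phi^2=Id$ where the paper uses the skew-adjointness $g(\phi e_i,e_j)=-g(e_i,\phi e_j)$; both determine the same coefficients.
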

\begin{proof}
We see, that  $|\lambda|\neq 1$, follows ${\rm ker}\,A =0$.
We set  $e_3$, $e_4$ as
$$
e_3 = Af_*(v_1)/c_\lambda, \;\; e_4=Af_*(v_2)/c_\lambda, \;\; c_\lambda=\sqrt{\lambda^2-1} > 0,
$$  
for $|\lambda| > 1$, and
$$
e_3=Af_*(v_2)/c_\lambda,\;\; e_4 = Af_*(v_1)/c_\lambda,\;\; c_\lambda=\sqrt{1-\lambda^2} > 0,
$$
for $|\lambda|<1$.  The above formulas 
are not same: note interchange in $v_1$ and $v_2$.
Then $(e_3,e_4)$ are orthonormal and $g(e_3,e_3)=+1$, $g(e_4,e_4)=-1$.  
We set  $v=v_1$, $v=v_2$, in (\ref{phif}), by definition of $e_3$, $e_4$, we find expressions for $\phi e_1$, $\phi e_2$.   
Then expressions  $\phi e_3$, $\phi e_4$,  come  from (\ref{phiA}).

To prove c), if $a=0$, then $A=0$, and $f$ is invariant or anti-invariant, orthogonal plane $N_qf$ is $\phi$-invariant, for given $\epsilon = \pm 1$, we can always find orthonormal base $(e_3^+,e_4^-)$ of $N_qf$, such that $\phi e_3 = \epsilon e_4$, $\phi e_4=\epsilon e_3$. So let $a\neq 0$, then by (\ref{gAA}), 
$l={\rm im}\, A$,  is isotropic line in normal plane, hence
$$
Af_*(v)=\alpha(v)n, \;\; v\in T_p\mathcal S,
$$ 
for 1-form $\alpha$, and  vector n, which spans  $l$. 
By (\ref{phiA}) 
 \begin{equation}
 \label{phin}
\phi n = \epsilon n, \;\; \alpha(\tilde \phi v) = -\frac{\lambda}{\epsilon}\alpha(v), \;\;\lambda =\pm 1.
\end{equation}
We may assume that $n=e_3+e_4$ for  some orthonormal base $(e^+,e^-)$ of $N_qf$. 
 Now 
\begin{equation}
\label{phivc}
\phi f_*(v) = \lambda f_*(\tilde \phi v)+\alpha(v)(e_3+e_4),
\end{equation}
setting $v=v_1$, and  $v=v_2$,  we  find coefficients for $\phi e_1$ and $\phi e_2$,
then by anti-symmetry  $g(\phi e_i,e_j)= -g(e_i,\phi e_j)$ --  coefficients for $\phi e_3$, $\phi e_4$, where  $a_0=\alpha(v_1)$, finally we verify that $\phi^2 e_i = e_i$, 
$i=1,\ldots,4$.  
\end{proof}
Let for simplicity assume that  $f: \mathcal S \rightarrow \mathcal M$, is 
slant embedding, so $f$ is slant at every point. Because conformal change of 
metric in terms of orthonormal frame is expressed by multiplying each frame 
element by the same constant, the above  proposition tells, that if we 
change conformally metrics on manifolds $\mathcal S$, and $\mathcal M$, 
without violating the condition, that $f$ is isometric immersion,  then $f$ 
still became slant, with exactly the same slant factors.
\begin{corollary}Isometric embedding 
 of para-Hermitian surface into four 
dimensional almost para-Hermitian manifold, is slant in its immersion 
conformal class
$\mathcal C(f)$. 
\end{corollary}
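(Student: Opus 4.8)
The plan is to reduce the whole statement to a single conformal-invariance observation. First I would make $\mathcal C(f)$ precise: it consists of the pairs $(\tilde g',g')=(e^{2\rho}\tilde g,\,e^{2\sigma}g)$ with $\sigma\in C^\infty(\mathcal M)$ and $\rho=\sigma\circ f$, so that $f^*g'=\tilde g'$ and $f$ is still an isometric immersion. I would then check that $(\mathcal M,\phi,g')$ and $(\mathcal S,\tilde\phi,\tilde g')$ remain almost para-Hermitian: the affinors $\phi,\tilde\phi$ are $(1,1)$-tensors and are left untouched by a conformal rescaling, the equality of the ranks of the $\pm1$-eigenspaces is a metric-free condition, and the compatibility $g(\phi X,\phi Y)=-g(X,Y)$ scales homogeneously, hence survives multiplication of $g$ by $e^{2\sigma}$; the same holds on $\mathcal S$ for $\tilde\phi$ and $\tilde g'$.

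The heart of the argument is that orthogonality, and therefore the splitting $T_q\mathcal M=T_qf\oplus N_qf$, is a conformal invariant, since $g'(X,Y)=0\iff g(X,Y)=0$. Consequently the tangential and normal projections $(\cdot)^\top$ and $(\cdot)^\perp$, which depend only on this direct-sum decomposition, are literally the same linear maps for $g$ and for $g'$. From this I would immediately conclude that the normal map $A=(\cdot)^\perp\circ\phi\circ f_*$ is unchanged as well.

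Next I would invoke the surface characterization of slantness, namely that $f$ is slant at $p$ exactly when (\ref{phif}) holds, $\phi f_*(v)=\lambda f_*(\tilde\phi v)+Af_*(v)$, with tangential part $\lambda f_*(\tilde\phi v)$ and normal part $Af_*(v)$. Every ingredient of this identity, that is $\phi$, $f_*$, $\tilde\phi$, the projection producing the tangential component, and $A$, is unaltered by the conformal change, so the identity persists verbatim with the identical real number $\lambda$. Thus $f$ is slant for $(\tilde g',g')$ with exactly the same slant factor, which is the assertion of the corollary. Equivalently, one may argue through Proposition \ref{locstr}: rescaling an adapted $g$-orthonormal frame $(e_1,e_2,e_3,e_4)$ to $(e^{-\sigma}e_1,\ldots,e^{-\sigma}e_4)$ yields a $g'$-orthonormal adapted frame in which, by linearity of $\phi$, the normal forms (\ref{phi1})--(\ref{phi3}) reappear with precisely the same coefficients $\lambda,c_\lambda,\epsilon,a_0$.

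I expect the only delicate point to be the bookkeeping of conformal factors: the factor on $\mathcal S$ must be the pullback $\rho=\sigma\circ f$ of the ambient factor, and in the frame-based version the matching $\rho(p)=\sigma(f(p))$ is exactly what pins the degenerate-case parameter $a_0$ (and the sign $\epsilon$) to its old value. The projection approach sidesteps this bookkeeping entirely, since it never introduces a normalization, so the genuinely conceptual input is simply the conformal invariance of orthogonal complements, and everything else is routine verification.
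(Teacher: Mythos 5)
Your proposal is correct, and its primary argument is a mild but genuine streamlining of the paper's. The paper justifies the corollary by appealing to Proposition \ref{locstr}: a conformal change of metric rescales an adapted orthonormal frame $(e_1,\dots,e_4)$ by a common factor, so the normal forms (\ref{phi1})--(\ref{phi3}) reappear with the same coefficients, in particular the same $\lambda$; that is exactly your ``equivalently, one may argue through Proposition \ref{locstr}'' remark. Your main route instead observes that the splitting $T_q\mathcal M = T_qf\oplus N_qf$, hence the projections $(\cdot)^\top$, $(\cdot)^\perp$ and the normal map $A$, are untouched by a conformal rescaling, so the characterizing identity (\ref{phif}) persists verbatim with the same $\lambda$. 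This bypasses the frame normal forms altogether and makes explicit two points the paper leaves implicit: that the rescaled pairs are still almost para-Hermitian, and that the conformal factor on $\mathcal S$ must be the pullback of the ambient one for $f$ to remain isometric. Both arguments reach the same conclusion; yours is more economical and would extend beyond the surface case (the hull decomposition in the general definition being likewise conformally invariant), while the paper's frame version additionally records that the whole adapted frame data, not just $\lambda$, survives the conformal change.
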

For example, by conformal invariance, we can replace ambient space of constant sectional curvature, by locally flat space, to find any slant surface immersion into  manifold of constant sectional curvature. From other hand, as 
every para-Hermitian surface is locally conformally flat, question of 
existence of slant immersion of given surface, can be, at least locally, reduced to 
the same question for locally flat surface.

At the end of this section we discuss relation between slant immersions 
and Lagrangian surfaces. Assuming $\mathcal M$ is almost para-K\"ahler, 
fundamental form $\omega$ is closed, and it is symplectic form on $\mathcal M$, 
so in natural manner any almost para-K\"ahler manifold is symplectic manifold. 
Surface $f:\mathcal S \rightarrow \mathcal M$, is called Lagrangian, if
$f^*\omega=0$. Of course a priori Lagrangian surface does not carry almost 
para-Hermitian structure. From other hand totally real immersion 
of para-Hermitian surface $f:\mathcal S \rightarrow \mathcal M$ is 
Lagrangian submanifold, we verify this directly.  In the view of the Proposition \ref{locstr}, totally real immersion can be treated as limit, where 
slant factor tends to zero, $\lambda \rightarrow 0$. Such point of view is 
natural when considering deformations of slant immersions. 
Natural question arises: can be given slant immersion deformed into totally 
real immersion? Thus as limit we would obtain 
(for $\mathcal M$ almost para-K\"ahler) Lagrangian submanifold. For 
example in \cite{NG}, there are classified Lagrangian immersions 
$f:\mathcal S \rightarrow \Sigma_1\times\Sigma_2$, into 
products  of para-K\"ahler surfaces. In this case the above question reads: are 
there almost para-Hermitian structure on $\mathcal S$, and family of 
slant immersions $f_t$, such that $f_{t_0} = f$?

In the view of the Proposition \ref{locstr}. 
There are frames 
of vectors field defined only along given slant surface. However if we able to 
extend smoothly these frames onto open neighborhood of such surface, in  the 
manner that  all formulas from the Proposition \ref{locstr}. are still valid, 
then we come directly to  described below examples. 

\noindent {\bf Example 1.} Let $\mathcal G$ be a Lie group, let $(e_1,\ldots,e_4)$, be a basis
 of its Lie algebra $\mathfrak g$, with 
 commutators
 \begin{gather*}
 [e_1,e_2] = [e_3,e_4]=0, \quad [e_1,e_3]=e_3, \quad [e_1,e_4]=e_4, \\ 
 [e_2,e_3] = -e_4,  \quad [e_2,e_4] = e_3.
 \end{gather*}
We define almost para-Hermitian structure $(\phi_\lambda,g)$ on $\mathcal G$ as follows:
 $(e_1^+,e_2^-,e_3^+,e_4^-)$ is $g$-orthonormal frame, and almost para-complex structure $\phi_\lambda$ is defined 
by  set of equations as  (\ref{phi1}), (\ref{phi2}) or (\ref{phi3}). Distribution $\mathcal D$, 
spanned by vector fields $e_1,e_2$, is completely integrable. Let $\mathcal S$ be a leaf of corresponding 
foliation, and $\iota: \mathcal S \rightarrow \mathcal G$, inclusion map. We equip $\mathcal S$ with almost
para-Hermitian structure $(\tilde \phi, \tilde g)$, 
\begin{equation*}
\iota_*\tilde \phi=\dfrac{1}{\lambda}\phi|_{\mathcal S}\,\iota_* , \quad \tilde g = \iota^* g,
\end{equation*}
where $\phi|_{\mathcal S}$ denotes restriction of ambient paracomplex structure to  leaf.
Now inclusion $\iota:\mathcal S\rightarrow G$, is slant immersion of para-Hermitian surface $(\mathcal S,\tilde \phi, \tilde g)$ into 
almost para-Hermitian manifold $(\mathcal G,\phi_\lambda, g)$, with constant slant factor $=\lambda$.
\vspace{0.25 cm}

\noindent {\bf Example 2.} Let $\mathcal M = (\mathbb R^4,g)$, where $g$ is neutral flat pseudo-metric. By 
$(x^1,\ldots,x^4)$ we denote global coordinates on $\mathbb R^4$, 
and $$
e_1^+=\frac{\partial}{\partial x^1},\quad e_2^-=\frac{\partial}{\partial x^2},
\quad e_3^+=\frac{\partial}{\partial x^3},\quad
 e_4^-=\frac{\partial}{\partial x^4},
$$ is corresponding 
global orthonormal frame of vector fields. For  smooth function 
$\lambda:\mathbb R^4\rightarrow \mathbb R $, we define almost paracomplex 
structure $\phi_\lambda$ on $\mathcal M$ as in  (\ref{phi1}) or (\ref{phi2}). 
Clearly there have to be satisfied conditions $|\lambda| > 1$ or 
$|\lambda| < 1$. Let $\iota: \mathcal S = \mathbb R^2 \subset \mathcal M$, 
be a plane given  by $x^3=const.$, $x^4=const.$, we define 
almost para-Hermitian structure $(\tilde\phi,\tilde g)$ on 
$\mathcal S'=\mathcal S \setminus \lbrace \lambda=0 \rbrace $,  
$$
\iota_*\tilde \phi = \dfrac{1}{\lambda}\phi|_{\mathcal S'}\,\iota_*, \quad
\tilde g = \iota^*g,
$$   
we additonally assume that $\lambda|_{\mathcal S'}$ is non-constant. With this structure 
inclusion $\iota:\mathcal S'\rightarrow \mathcal M$ is slant immersion 
with non-constant slant factor $\lambda|_{\mathcal S'}$.

\bibliographystyle{amsplain}
 
\end{document}